\newtheorem{theo}{Theorem}[section]
\newtheorem{cor}{Corollary}[section]
\theoremstyle{definition}
\newtheorem{definiz}{Definition}[section]
\newtheorem{claim}{Claim}
\newtheorem{rem}{Remark}[section]
\newtheorem{ex}{Example}[section]
\numberwithin{equation}{section}
\newcommand{\R}{\mathbb R}
\newcommand{\de}{\partial}
\newcommand{\eps}{\varepsilon}
\newcommand{\lto}{\left(}
\newcommand{\rto}{\right)}
\DeclareMathOperator{\divergenza}{div}
\DeclareMathOperator{\sign}{sign}
\newtoks\by
\newtoks\paper
\newtoks\book
\newtoks\jour
\newtoks\yr
\newtoks\pg
\newtoks\vol
\newtoks\publ
\def\ota{{\hbox\vol{???}}}
\def\cLear{\by=\ota\paper=\ota\book=\ota\jour=\ota\yr=\ota
\pg=\ota\vol=\ota\publ=\ota}
\def\endpaper{\the\by, \textit{\the\paper},
{\the\jour} \textbf{\the\vol} (\the\yr), \the\pg.\cLear}
\def\endbook{\the\by, \textit{\the\book}, \the\publ.\cLear}
\def\endprep{\the\by, \textit{\the\paper}, \the\jour.\cLear}
\def\endproc{\the\by, \textit{\the\paper}, \the\book, \the\publ,
\the\yr, \the\pg.\cLear}
\begin{document}

\title{Anisotropic Hardy inequalities}
\author{Francesco Della Pietra%
\thanks{Universit\`a degli studi di Napoli Federico II, Dipartimento di Matematica e Applicazioni ``R. Caccioppoli'', Via Cintia, Monte S. Angelo - 80126 Napoli, Italia. Email: f.dellapietra@unina.it} 
{, }\\
Giuseppina di Blasio%
\thanks{Seconda Universit\`a degli studi di Napoli, Dipartimento di Ma\-te\-ma\-ti\-ca e Fisica, Via Vivaldi, 43 - 81100 Caserta, Italia. Email: giuseppina.diblasio@unina2.it},\\
Nunzia Gavitone
\thanks{Universit\`a degli studi di Napoli Federico II, Dipartimento di Matematica e Applicazioni ``R. Caccioppoli'', Via Cintia, Monte S. Angelo - 80126 Napoli, Italia. Email: nunzia.gavitone@unina.it} 
}
\date{}
\pagestyle{scrheadings} 
\maketitle
\begin{abstract}
{\sc Abstract.} We study some Hardy-type inequalities involving a general norm in 
$\R^{n}$ and an anisotropic distance function to the boundary. 
The case of the optimality of the constants is also addressed.\\

\small
{\textsc MSC: 26D10, 26D15} \\
{Keywords: Hardy inequalities, anisotropic operators.} \\
\end{abstract}

\section{Introduction}
Let $F$ be a smooth norm of $\R^{n}$. In this paper we investigate the validity of Hardy-type inequalities
\begin{equation}
 \label{introth}
 \displaystyle \int_{\Omega} F(\nabla  u)^2 \,dx \ge 
C_{F}(\Omega) \displaystyle 
\int_{\Omega} \frac{u^2}{d_F^2} \,dx, 
\qquad \forall u \in H_0^1(\Omega),
\end{equation}
where $\Omega$ is a domain of $\R^{n}$, and $d_{F}$ is the anisotropic distance to the boundary with respect the dual norm (see Section 2 for the precise assumptions and definitions). 
We aim to study the best possible constant for which \eqref{introth} holds, in the sense that
\[
C_{F}(\Omega)=\inf_{u\in H_{0}^{1}(\Omega)} \frac{\displaystyle\int_{\Omega} F(\nabla  u)^2 \,dx}{\displaystyle\int_{\Omega}\frac{u^2}{d_{F}^2} \,dx}.
\]

In the case of the Euclidean norm, that is when $F=\mathcal E=|\cdot |$, the inequality \eqref{introth} reduces to
\begin{equation}
 \label{hardy:intro} \displaystyle \int_{\Omega} |\nabla  u|^2 \,dx \ge 
C_{\mathcal E}(\Omega)\displaystyle \int_{\Omega} \frac{u^2}{d_{\mathcal E}^2} \,dx,
\qquad  \forall u \in H_0^1(\Omega),
\end{equation}
where 
\[
d_{\mathcal E}(x)=\inf_{y\in\de\Omega}|x-y|,\qquad x\in\Omega
\] 
is the usual distance function from the boundary of $\Omega$, and $C_{\mathcal E}$ is the best possible constant.

The inequality \eqref{hardy:intro} has been studied by many authors, under several points of view. For example, it is known that for any bounded domain with Lipschitz boundary $\Omega$  of $\R^{n}$, $0<C_{\mathcal E}(\Omega)\le \frac 14$ (see \cite{da95,mmp,bm97}). In particular, if $\Omega$ is a convex domain of $\R^{n}$, the optimal constant $C_{\mathcal E}$ in \eqref{hardy:intro} is independent of $\Omega$, and its value is $C_{\mathcal E}=\frac 14$, but there are smooth bounded domains such that $C_{\mathcal E}(\Omega)<\frac 14$ (see \cite{mmp,ms97}). Furthermore, in \cite{mmp} it is proved that $C_{\mathcal E}$ is achieved if and only if it is strictly smaller than $\frac14$.

Actually, the value of the best constant $C_{\mathcal E}(\Omega)$ is still $\frac 14$ for a more general class of domains. This has been shown, for example, in \cite{bft}, under the assumption that $d_{\mathcal E}$ is superharmonic in $\Omega$, in the sense that 
\begin{equation}
\label{superintro}
\Delta d_{\mathcal E} \le 0\quad\text{in }\mathcal D'(\Omega).
\end{equation}
As proved in \cite{lll}, when $\Omega$ is a $C^{2}$ domain the condition \eqref{superintro} is equivalent to require that $\de\Omega$ is weakly mean convex, that is its mean curvature is nonnegative at any point. 

The fact that the constant $C_{\mathcal E}(\Omega)=\frac 14$ is not achieved has lead to the interest of studying  ``improved'' versions of \eqref{hardy:intro}, by adding remainder terms which depend, in general, on suitable norms of $u$. 
For instance, if $\Omega$ is a bounded and convex set, in \cite{bm97} it has been showed that
\begin{equation}
\label{bm}
\displaystyle \int_{\Omega} |\nabla  u|^2 \,dx \ge 
\frac 14\displaystyle \int_{\Omega} \frac{u^2}{d_{\mathcal E}^2} \,dx + \frac{1}{4L^{2}}\displaystyle \int_{\Omega} u^2\,dx, \qquad  \forall u \in H_0^1(\Omega),
\end{equation}
where $L$ is the diameter of $\Omega$. Actually, in \cite{hhl} the authors showed that the value $ \frac{1}{4L^{2}}$ can be replaced by a constant which depends on the volume of $\Omega$, namely $ {c(n)}{|\Omega|^{{-\frac 2n}}}$; here $c(n)$ is suitable constant depending only on the dimension of the space (see also \cite{fmt06}).

When $\Omega$ satisfies condition \eqref{superintro}, several improved versions of \eqref{hardy:intro} can be found for instance in \cite{bft03,bft,fmt06}. More precisely, in \cite{bft} the authors proved that 
\begin{equation}
\label{imprintro}
	\int_{\Omega}|\nabla u|^{2} dx - \frac{1}{4}
	\int_{\Omega}\frac{|u|^{2}}{d_{{\mathcal{E}}}^{2}}dx \ge  \frac{1}{4} 
	 \int_{\Omega} \frac{ |u|^{2} }{ d_{\mathcal{E}}^{2}}
\left(\log\frac{D_{0}}{d_{\mathcal{E}}}\right)^{-2}	 
	 dx,
\end{equation}
where  $D_{0}\ge e\cdot\sup\{ d_{\mathcal{E}}(x,\de \Omega)\}$  and  $u\in H_0^1(\Omega)$. 

The aim of this paper is to study Hardy inequalities of the type \eqref{introth}, and to show improved versions in the anisotropic setting given by means of the norm $F$, in the spirit of \eqref{imprintro}. For example, one of our main result states that for suitable domains $\Omega$ of $\R^{n}$, and for every function $u \in H_0^1(\Omega)$, it holds that
\begin{equation}
 \label{mainintro}
\int_{\Omega} F^{2}(\nabla  u) \,dx - \frac{1}{4}\displaystyle \int_{\Omega} \frac{u^2}{d_F^2} \,dx \ge\frac14\int_{\Omega} \frac{u^{2}}{d_{F}^{2}}\left(\log\frac{D}{d_F}\right)^{-2}
dx,
 \end{equation}
where $D= e\cdot\sup\{d_{F}(x,\de \Omega),x\in\Omega\}$.

The condition we will impose on $\Omega$ in order to have \eqref{mainintro} will involve the sign of an anisotropic Laplacian of $d_{F}$ (see sections 2 and 3). We will also show that such condition is, in general, not equivalent to \eqref{superintro}. 

Actually, we will deal also with the optimality of the involved constants.
Moreover, we will show that \eqref{mainintro} implies an improved version of \eqref{introth} in terms of the $L^{2}$ norm of $u$, in the spirit of \eqref{bm}. More precisely, we will show that if $\Omega$ is a convex bounded open set then
 \begin{equation*}
 \int_{\Omega} F(\nabla u)^{2} dx -  \frac{1}{4} \int_{\Omega} \frac{ |u|^{2} }{ d_F^{2}}dx \ge C(n)|\Omega|^{-\frac{2}{n}} \int_{\Omega}|u|^{2} \,dx.
\end{equation*}

We emphasize that Hardy type inequalities in anisotropic settings have been studied, for example, in \cite{schaft,zaa,bfra}, where, instead of considering the weight $d_{F}^{-2}$, it is taken into account a function of the distance from a point of the domain (see for example \cite{bft,gaper,bv97,hlp,vz,afv,bct} and the reference therein for the Euclidean case).

The structure of the paper is the following. In Section 2 we fix the necessary notation and provide some preliminary results which will be needed later. Moreover, we discuss in some details the condition we impose on $\Omega$ in order to be \eqref{mainintro} and \eqref{introth} true. In Section 3 we study the inequality \eqref{introth} and we give some applications. In Section 4 the improved versions of \eqref{introth} are investigated and, finally, Section 5 is devoted to the study of the optimality of the constants in \eqref{mainintro}.

\section{Notation and preliminaries}
Throughout the paper we will consider a convex even 1-homogeneous function 
\[
\xi\in \R^{n}\mapsto F(\xi)\in [0,+\infty[,
\] 
that is a convex function such that
\begin{equation}
\label{1hom}
F(t\xi)=|t|F(\xi), \quad t\in \R,\,\xi \in \R^{n}, 
\end{equation}
 and such that
\begin{equation}
\label{crescita}
\alpha_{1}|\xi| \le F(\xi),\quad \xi \in \R^{n},
\end{equation}
for some constant $0<\alpha_{1}$. 
Under this hypothesis it is easy to see that there exists $\alpha_{2}\ge \alpha_{1}$ such that
\[
H(\xi)\le \alpha_{2} |\xi|,\quad \xi \in \R^{n}.
\]
Furthermore we suppose that $F^{2}$ is strongly convex, in the sense that $F\in C^{2}(\R^{n}\setminus \{0\})$ and
\[
\nabla_{\xi}^{2} F^{2}>0 \qquad \text{in }\R^{n}\setminus\{0\}.
\]
%
%
%

In this context, an important role is played by the polar function of $F$, namely the function $F^{o}$ defined as
\[
x\in \R^{n}\mapsto F^{o}(x)= \sup_{\xi \not=0}\frac{\xi\cdot x}{F(\xi)}.
\]
It is not difficult to verify that $F^{o}$ is a convex, $1$-homogeneous function that satisfies
\begin{equation}
\label{crescitazero}
\frac{1}{\alpha_{2}} |\xi| \le F^{o}(\xi) \le \frac{1}{\alpha_{1}} |\xi|,\quad \forall \xi\in\R^{n}. 
\end{equation}
Moreover, the hypotheses on $F$ ensures that $F^o \in C^2(\R^n\setminus \{0\})$ (see for instance \cite{schn})
\[
F(x)=\lto F^{o}\rto^{o}(x)= \sup_{\xi \not=0}\frac{\xi\cdot x}{F^{o}(\xi)}.
\]

The following well-known properties hold true:
\begin{align}
 &F_\xi(\xi)\cdot \xi = F(\xi),\quad \xi\ne 0, \label{eq:eul} \\
 &F_\xi ( t \xi ) = \sign t \cdot F_\xi(\xi), \quad \xi \ne 0, \, t \ne 0, \label{0-om}\\
 &\nabla^{2}_{\xi}F(t\xi)=\frac{1}{|t|}\nabla^{2}_{\xi}F(\xi)\quad\xi \ne 0, \, t \ne 0,\\
 & F\lto F_\xi^o(\xi)\rto=1,\quad \forall \xi \not=0, \label{eq:F1} \\
 &F^o(\xi)\, F_{\xi}\lto F^o_{\xi}(\xi) \rto = \xi \quad \forall \xi \not=0.  \label{eq:FF0}
\end{align} 
Analogous properties hold interchanging the roles of $F$ and $F^{o}$. 

The open set
\[
\mathcal W = \{  \xi \in \R^n \colon F^o(\xi)< 1 \}
\]
is the so-called Wulff shape centered at the origin.
More generally, we denote 
\[
\mathcal W_r(x_0)=r\mathcal W+x_0=\{x\in \R^{2}\colon F^o(x-x_0)<r\},
\] 
and $\mathcal W_r(0)=\mathcal W_r$.

We recall the definition and some properties of anisotropic curvature for a smooth set. For further details we refer the reader, for example, to \cite{atw} and \cite{bp}.
\begin{definiz}
\label{normale}
Let $A \subset \R^n$ be a open set with smooth boundary.  The anisotropic outer normal $n_{A}$ is defined as
  \[
  n_A(x)= \nabla_{\xi} F(\nu_A(x)),\quad x\in \partial A,
  \]
  where $\nu_A$ is the unit Euclidean outer normal to $\de A.$
  \end{definiz}
  \begin{rem}
  \label{normwulff}
We stress that if $A=\mathcal W_{r}(x_{0})$, by the properties of $F$ it follows that
\[
n_A(x)= \nabla_\xi F\big(\nabla_{\xi} F^o(x-x_0) \big) =
\frac{1}{r}(x-x_0),\quad x\in \de A.
\]
\end{rem}

Finally, let us recall the definition of Finsler Laplacian
\begin{equation}
\label{lapani}
\Delta_F  u= \text{div}(F(\nabla u)F_\xi(\nabla u)), 
\end{equation}
defined for $u\in H^1_0(\Omega).$ 

\begin{definiz} \label{super}
Let $u\in H^1_0(\Omega)$, we say that $u$ is $\Delta_F$-superharmonic if 
\begin{equation}
\label{C}
\tag{$C_{H}$}
-\Delta_{F}u \ge 0 \quad\text{in }\mathcal D',
\end{equation}
that is
\[
\int_{\Omega}F(\nabla u)F_{\xi}(\nabla u) \cdot \nabla \varphi\, dx \ge 0, \qquad \forall \varphi \in C_{0}^{\infty}(\Omega),\,\varphi \ge 0.
\]
\end{definiz}
Similarly, by writing that $u$ is $\Delta$-superharmonic we will mean that $u$ is superharmonic in the usual sense, that is $-\Delta u \ge 0$.

\subsection{\bf{Anisotropic distance function}}
Due to the nature of the problem, it seems to be natural to consider a suitable notion of distance to the boundary. 

Let us consider a domain $\Omega$, that is a connected open set of $\R^n$, with non-empty boundary. 

The anisotropic distance of $x\in\overline\Omega$ to the boundary of $\de \Omega$ is the function 
\begin{equation}
\label{defdist}
d_{F}(x)= \inf_{y\in \de \Omega} F^o(x-y), \quad x\in \overline\Omega.
\end{equation}

We stress that when $F=|\cdot|$ then $d_F=d_{\mathcal{E}}$, the Euclidean distance function from the boundary.

It is not difficult to prove that $d_{F}$ is a uniform Lipschitz function in $\overline \Omega$ and, using the property \eqref{eq:F1},
\begin{equation}
  \label{Fd}
  F(\nabla d_F(x))=1 \quad\text{a.e. in }\Omega.
\end{equation}
Obviously, assuming $\sup_{\Omega} d_{F}<+\infty$, $d_F\in W_{0}^{1,\infty}(\Omega)$ and the quantity
\begin{equation}
\label{inrad}
r_{F}=\sup \{d_{F}(x),\; x\in\Omega\},
\end{equation}
is called  anisotropic inradius of $\Omega$.

For further properties of the anisotropic distance function we refer
the reader to \cite{cm07}.

The main assumption in this paper will be that $d_F$ is $\Delta_F$-su\-per\-harmonic in the sense of Definition \ref{super}.

\begin{rem}
We emphasize that if $\Omega$ is a convex set, the functions $d_{\mathcal{E}}$ and $d_{F}$ are respectively $\Delta$ and $\Delta_F$-superharmonic. This can be easily proved by using the concavity of $d_{\mathcal{E}}$ and $d_{F}$ in $\Omega$ (see for instance \cite{eg}, and \cite{pota} for the anisotropic case). 

Actually, in the Euclidean case there exist non-convex sets for which $d_{\mathcal{E}}$  is still $\Delta$-superharmonic. An example can be obtained for instance in dimension $n=3$, taking the standard torus (see \cite{ak}). Similarly, in the following example we show that there exists a non-convex set such that the anisotropic distance function $d_F$ is $\Delta_F$-superharmonic.
\end{rem}

\begin{ex}\label{ete}
Let us consider the following Finsler norm in $\R^3$
\[
F(x_1,x_2,x_3)=(x^2_1+x^2_2+a^2x^2_3)^\frac{1}{2},
\]
with $a>0$; then 
\[
F^o(x_1,x_2,x_3)=\left(x^2_1+x^2_2+\frac{x^2_3}{a^2}\right)^\frac{1}{2}.
\]

We consider the set $\Omega\subset \R^3$ obtained by rotating the ellipse  
\[
\gamma=\{(0,x_2,x_3):(x_2-R)^2+\frac{x^2_3}{a^2}<r^2\} \quad \text{with} \quad R>r>0,
\]
about the $x_3-$axis. Obviously $\Omega $ is not convex. In order to show that \eqref{C} holds, we first observe that if we fix a generic point $x\in \gamma$, then being  $F$ isotropic with respect to the first two components the anisotropic distance is achieved in a point $\overline x$ of the boundary of $\gamma$. Moreover it is not difficult to show that the vector $\overline x-x$ has the same direction of the anisotropic normal $n_\Omega$ (see Definition \ref{normale}, and see also \cite{dpg1}).  Hence by Remark \ref{normwulff}
\[
d_F(x)=r-F^o(x-x_0),
\]
where $x_0=(0,R,0)$ is the center of the ellipse. 

Now let us introduce a plane polar coordinates $(\rho, \vartheta)$ such that to a generic point $Q=(x_1,x_2,x_3)\in \R^3$, it is associated the point $Q'=(\rho \cos \vartheta, \rho \sin \vartheta,x_3)$, where $\rho=\sqrt{x^2_1+x^2_2}$ and $\vartheta\in [0,2\pi]$. Then, by construction, 
\[
\Omega=\{Q'\in \R^3: F^o(Q'-C)<r\},
\]
    where $C=(R \cos\vartheta, R \sin\vartheta, 0)$ and $F^o(Q'-C)^2=\left(R-\rho \right)^2+\frac{x^2_3}{a^2}$ . 

Then as observed before, fixed $Q'\in \Omega$
\begin{multline*}
d_F(Q')=r-F^o(Q'-C)=\\ =r-\sqrt{\left(R-\rho \right)^2+\frac{x^2_3}{a^2}}=r-\sqrt{\left(R-\sqrt{x^2_1+x^2_2} \right)^2+\frac{x^2_3}{a^2}}.
\end{multline*}

Now we are in position to prove \eqref{C}. We note that for all $Q'\neq C$
\begin{gather}
\begin{split}
\Delta_F  d_F(Q') &=\text{div}(F(\nabla d_F)F_\xi(\nabla d_F))\\
&=\frac{\de^2 d_F}{\de x^2_1}+\frac{\de^2 d_F}{\de x^2_2} +a^2 \frac{\de^2 d_F}{\de x^2_3}\\
&=\frac{1}{\rho}\frac{\de d_F }{\de \rho}+\frac{\de^2 d_F }{\de \rho^2}+a^2 \frac{\de^2 d_F}{\de x^2_3}\\
&=\frac{R-2\rho}{\rho F^o(Q'-C)}.
\end{split}
\end{gather}
Being $\varrho>R-r$, we get that  $d_F $ is $\Delta_F$-superharmonic in $\Omega$ if $R>2r$ for all $a>0$.
\end{ex}

\begin{rem}
\label{equiv}
In general,  if $\Omega$ is not convex, to require that $d_{F}$ is $\Delta_F$-superharmonic does not assure that  $d_{\mathcal E}$ is  $\Delta$-superharmonic. Indeed, let $\Omega$ be as in Example \ref{ete}; if we take $R\ge 2r$ then, as showed before, $-\Delta_F  d_F\geq0$. On the other hand it is possible to choose $a>0$ such that $d_{\mathcal E}$ is not $\Delta$-superharmonic. To do that, it is enough to prove that the mean curvature of $\Omega$ is negative at some points of the boundary for a suitable choice of $a$. Indeed in \cite{lll} it is proved that $d_{\mathcal E}$ is $\Delta$-superharmonic on $\Omega$ if and only if the mean curvature $H_\Omega (y)\ge 0$ for all $y\in \de\Omega$.

The parametric equations of $\de \Omega$ are 
\begin{center}
$
\varphi(t,\vartheta):
\begin{cases}
x_1=(R+r\cos \vartheta) \cos t= \phi(\vartheta) \cos t\\
x_2=(R+r\cos \vartheta) \sin t=\phi(\vartheta) \sin t\\
x_3=a r \sin \vartheta= \psi(\vartheta),
\end{cases}$
\end{center}
where $t,\vartheta \in [0,2 \pi]$.

Then for $y=\varphi(t,\vartheta) \in \de \Omega$ we have

\begin{gather}
\begin{split}
H_\Omega(y)&= -\displaystyle \frac{\phi (\phi'' \psi'-\phi'\psi'')-\psi'((\phi')^2+(\psi')^2)}{2 |\phi|((\phi')^2+(\psi')^2)^{\frac{3}{2}}}\\ 
 &= \displaystyle \frac{a r^2 (R+2r\cos\vartheta +r\cos^3\vartheta (a^2-1))}{2|R+r \cos \vartheta | (r^2 \sin^2 \vartheta+a^2r^2 \cos^2\vartheta)^{\frac{3}{2}}}.
\end{split}
\end{gather}
Finally we observe that if $\vartheta=\pi$ then $H_{\Omega}(y)<0,$  if $a >1$.  
\end{rem}

\section{Anisotropic Hardy inequality}
\begin{theo}
\label{hardyclass}
Let $\Omega$ be a domain in $\R^n$ and suppose that condition \eqref{C} holds. Then for every function $u \in H_0^1(\Omega)$ the following anisotropic Hardy inequality holds
\begin{equation}
 \label{hardy}
 \displaystyle \int_{\Omega} F(\nabla  u)^2 \,dx \ge 
\frac{1}{4}\displaystyle \int_{\Omega} \frac{u^2}{d_F^2} \,dx
\end{equation}
where $d_F$ is the anisotropic distance function from the boundary of $\Omega$ defined in \eqref{defdist}.
\end{theo}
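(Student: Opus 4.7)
The natural strategy is the anisotropic analogue of the Agmon–Allegretto–Piepenbrink ground-state substitution: set $\phi := d_F^{1/2}$, write $u = \phi v$, use convexity of $F^2$ to bound $F^2(\nabla u)$ from below, and then conclude via a distributional inequality $-\Delta_F \phi \ge \phi/(4 d_F^2)$ inherited from hypothesis \eqref{C}. By density of $C_c^\infty(\Omega)$ in $H_0^1(\Omega)$ it suffices to prove \eqref{hardy} for $u \in C_c^\infty(\Omega)$; on $\mathrm{supp}(u)$ the function $\phi$ is Lipschitz and bounded below away from zero, so $v = u/\phi$ is Lipschitz with compact support in $\Omega$.

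Since $F^2$ is convex with $\nabla_\xi F^2 = 2 F F_\xi$, the supporting-hyperplane inequality gives, pointwise a.e.,
\[
F^2(\nabla u) = F^2(v \nabla \phi + \phi \nabla v) \ge F^2(v \nabla\phi) + 2 F(v \nabla\phi) F_\xi(v \nabla\phi) \cdot \phi \nabla v,
\]
which, using the homogeneities \eqref{1hom} and \eqref{0-om}, simplifies to $v^2 F^2(\nabla\phi) + 2 v \phi F(\nabla\phi) F_\xi(\nabla\phi) \cdot \nabla v$. Integrating, rewriting $2 v \nabla v = \nabla(v^2)$, integrating by parts (legitimate since $v^2$ is Lipschitz and compactly supported), and using the Euler identity \eqref{eq:eul} to compute $\mathrm{div}\bigl(\phi F(\nabla\phi) F_\xi(\nabla\phi)\bigr) = F^2(\nabla\phi) + \phi \Delta_F \phi$, the two $F^2(\nabla\phi)$ contributions cancel and I would arrive at
\[
\int_\Omega F^2(\nabla u)\,dx \ge \langle -\Delta_F \phi, \, v^2 \phi\rangle.
\]

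It remains to evaluate the right-hand side. Since $F(\nabla d_F) = 1$ a.e.\ by \eqref{Fd} and $F_\xi$ is $0$-homogeneous, one has $F(\nabla\phi) F_\xi(\nabla\phi) = \frac{1}{2 d_F^{1/2}} F_\xi(\nabla d_F)$, and a direct distributional computation (again using \eqref{eq:eul} together with $F(\nabla d_F)=1$) yields
\[
-\Delta_F \phi = \frac{1}{4 d_F^{3/2}} - \frac{1}{2 d_F^{1/2}}\,\Delta_F d_F \quad \text{in } \mathcal{D}'(\Omega).
\]
Pairing with the nonnegative test function $v^2 \phi$, the first term contributes $\int v^2 \phi/(4 d_F^{3/2})\,dx = \frac{1}{4}\int u^2/d_F^2\,dx$ (using $\phi^2 = d_F$), while the second contributes $\langle -\Delta_F d_F,\, v^2/2\rangle \ge 0$ by hypothesis \eqref{C}. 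Combined with the previous display, this gives \eqref{hardy}.

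The main delicate point is the rigorous distributional justification of the two identities above: $d_F$ is only Lipschitz, and $\Delta_F d_F$ is a priori just an element of $\mathcal{D}'(\Omega)$ (which, under \eqref{C}, is in fact a nonpositive Radon measure by standard distribution theory). The chain-rule expression for $\Delta_F \phi$ has to be unwound by testing against $C_c^\infty$ functions, using that $1/d_F^{1/2}$ is bounded on the support of any such test function so that its multiplication against the measure $\Delta_F d_F$ remains well-defined; once this is in place, the approximation of the Lipschitz, compactly supported function $v^2/2$ by $C_c^\infty$ functions legitimises the final pairing.
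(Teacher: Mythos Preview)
Your argument is correct and is, at the computational level, the same proof as the paper's. Both apply the convexity inequality $F^{2}(\xi_{1})\ge F^{2}(\xi_{2})+2F(\xi_{2})F_{\xi}(\xi_{2})\cdot(\xi_{1}-\xi_{2})$ with $\xi_{1}=\nabla u$ and the \emph{same} choice $\xi_{2}=\tfrac{u}{2d_{F}}\nabla d_{F}$ (in your notation $v\nabla\phi$), followed by an integration by parts and the hypothesis $-\Delta_{F}d_{F}\ge 0$. The only cosmetic difference is that the paper leaves a free parameter $A$ in $\xi_{2}=A\,\tfrac{u}{d_{F}}\nabla d_{F}$ and maximises $A-A^{2}$ at the end, whereas you build in the optimal choice from the start via the ground-state substitution $\phi=d_{F}^{1/2}$; this forces you through the additional chain-rule computation of $\Delta_{F}(d_{F}^{1/2})$, which the paper avoids by working directly with $d_{F}$.
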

\begin{proof}
First we prove inequality \eqref{hardy}.
Being $F^{2}$ convex, we have that
\[
F(\xi_{1})^{2}\ge F(\xi_{2})^{2}+2F(\xi_{2})F_{\xi}(\xi_{2})\cdot (\xi_{1}-\xi_{2}).
\]
Hence putting $ \xi_{1}= \nabla u$ and $\xi_{2}=Au\frac{\nabla d_{F}}{d_{F}}$,  with $A$ positive constant, recalling that $F(\nabla d_{F})=1$, by the  homogeneity of $F$ we get
\begin{gather*}
   \displaystyle\int_{\Omega} F(\nabla u)^2\,dx\ge -A^2\displaystyle\int_{\Omega} \frac{u^2}{d_F^2}  \,dx+A\displaystyle\int_{\Omega} \frac{2u}{d_F} F_{\xi}(\nabla d_{F}) \cdot \nabla u \, dx  . 
\end{gather*}   
By the Divergence Theorem (in a general setting, contained for example in \cite{anz}) we have
\begin{gather*}
 \displaystyle\int_{\Omega} F(\nabla u)^2\,dx\ge-A^2\displaystyle\int_{\Omega} \frac{u^2}{d_F^2}  \,dx+A\displaystyle\int_{\Omega} \frac{F_{\xi}(\nabla d_{F})}{d_F}  \cdot \nabla (u^{2}) \, dx  \\
 \ge -A^2\displaystyle\int_{\Omega} \frac{u^2}{d_F^2}  \,dx-A\displaystyle\int_{\Omega}  u^{2 }\frac{\Delta_{F} d_{F}}{d_F}  \, dx+A  \displaystyle\int_{\Omega} \frac{u^2}{d_F^2}  \,dx
\end{gather*}

Being $-\Delta_{F} d_F \ge 0$ we get
\begin{gather*}
  \displaystyle\int_{\Omega} F(\nabla u)^2\,dx \ge (A-A^2)\displaystyle \int_{\Omega} \frac{u^2}{d_F^2} \,dx. 
\end{gather*}
Then maximizing with respect to $A$ we obtain that $A=\frac 1 2$, and then \eqref{hardy} follows.

\end{proof}
\begin{rem}
\label{alphabeta}
We observe that if $\Omega$ is a convex domain in $\R^{n}$, an  inequality of the type \eqref{hardy} can be immediately obtained by using the following classical  Hardy inequality involving the Euclidean distance function  $d_{\mathcal{E}}$
\begin{equation}
 \label{hardyeu} \displaystyle \int_{\Omega} |\nabla  u|^2 \,dx \ge 
\frac{1}{4}\displaystyle \int_{\Omega} \frac{u^2}{d_{\mathcal{E}}^2} \,dx.
\end{equation}
By \eqref{crescita} we easily  get 
\begin{equation} \label{a<b}
\displaystyle \int_{\Omega} F(\nabla  u)^2 \,dx \ge \alpha^2\int_{\Omega} |\nabla  u|^2 \,dx \ge 
\frac{\alpha^2}{4}\displaystyle \int_{\Omega} \frac{u^2}{d_{\mathcal{E}}^2} \,dx \ge \frac{1}{4}\frac{\alpha_{1}^2}{\alpha_{2}^2}\displaystyle \int_{\Omega} \frac{u^2}{d_F^2},
\end{equation}
where the constant in the right-hand side is smaller than $\frac 14$ since $\alpha_{1}<\alpha_{2}$. 
We emphasize that if $\Omega $ is not convex inequality  \eqref{a<b} holds under the assumption that $d_{\mathcal{E}}$ is $\mathcal{E}$-superharmonic, since \eqref{hardyeu} is in force. On the other hand  the assumption on $d_{\mathcal{E}}$ is not related with the hypotesis  required about  $d_F$ in the  Theorem \ref{hardyclass}, as observed in Remark \ref{equiv}.
\end{rem}

Using Theorem \ref{hardyclass} it is not difficult  to obtain a lower bound for the first eigenvalue of $\Delta_F $ defined in \eqref{lapani}.
\begin{cor}
Let $\Omega$ be a bounded domain of $\R^n$ and suppose that condition \eqref{C} holds. Let $\lambda_1(\Omega)$ be the first Dirichlet eigenvalue of the Finsler Laplacian, that is
\begin{equation}
  \label{eig}
  \lambda_1(\Omega) = \min_{\substack{u\in H^1_0(\Omega)\\ u\neq 0}}
\dfrac{\displaystyle \int_\Omega [F(\nabla u)]^{2}dx} {\displaystyle\int_\Omega |u|^2 dx}.
\end{equation}
Then 
\[
\lambda_1(\Omega) \ge \frac{1}{4r^2_F},
\]
where $r_F$ is the anisotropic inradius of $\Omega$ defined in \eqref{inrad}.
\end{cor}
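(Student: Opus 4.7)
The plan is to combine the anisotropic Hardy inequality of Theorem \ref{hardyclass} with the trivial pointwise bound $d_F(x) \le r_F$ inside $\Omega$, and then to read off the eigenvalue estimate directly from the Rayleigh quotient characterization \eqref{eig}.

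More precisely, I would start with an arbitrary $u \in H_0^1(\Omega)\setminus\{0\}$. Since condition \eqref{C} is assumed to hold, Theorem \ref{hardyclass} applies and yields
\[
\int_{\Omega} F(\nabla u)^2\,dx \;\ge\; \frac{1}{4}\int_{\Omega}\frac{u^2}{d_F^2}\,dx.
\]
By definition \eqref{inrad} of the anisotropic inradius, $d_F(x)\le r_F$ for every $x\in\Omega$, so $d_F(x)^{-2}\ge r_F^{-2}$ pointwise, and hence
\[
\int_{\Omega}\frac{u^2}{d_F^2}\,dx \;\ge\; \frac{1}{r_F^2}\int_{\Omega} u^2\,dx.
\]
Chaining these two inequalities gives
\[
\int_{\Omega} F(\nabla u)^2\,dx \;\ge\; \frac{1}{4 r_F^2}\int_{\Omega} u^2\,dx.
\]

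Dividing by $\int_\Omega u^2\,dx$ and taking the infimum over $u \in H_0^1(\Omega)\setminus\{0\}$ in the quotient that defines $\lambda_1(\Omega)$ in \eqref{eig} yields the desired bound $\lambda_1(\Omega)\ge 1/(4 r_F^2)$. There is no genuine obstacle here: the entire content is in Theorem \ref{hardyclass}, and the corollary is essentially a one-line consequence once one notices that the weight $d_F^{-2}$ can be replaced, at the cost of a constant, by the constant weight $r_F^{-2}$. One should only remark that the boundedness of $\Omega$ guarantees $r_F<+\infty$, so the right-hand side is a meaningful positive quantity.
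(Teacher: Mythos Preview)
Your proof is correct and follows essentially the same approach as the paper: apply the anisotropic Hardy inequality \eqref{hardy} and then use the pointwise bound $d_F\le r_F$. The only cosmetic difference is that the paper plugs in the normalized first eigenfunction directly, whereas you bound the Rayleigh quotient for an arbitrary test function and then take the infimum.
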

\begin{proof}
Let $v$ the first eigenfunction related to $\lambda_1(\Omega)$ such that $\|v\|_{L^2(\Omega)}=1$. Then  \eqref{eig} and inequality \eqref{hardy} imply 
\begin{equation*}
  \lambda_1(\Omega) =\displaystyle \int_\Omega [F(\nabla v)]^{2}dx \ge \frac{1}{4} \int_{\Omega} \frac{v^2}{d_{F}^2} \,dx \ge \frac{1}{4r^2_F},
\end{equation*}
that is the claim.
\end{proof}

\section{Hardy inequality with a reminder term}
\begin{theo}
\label{reminder}
Let $\Omega$ be a domain of $\R^n$. Let us suppose also that condition \eqref{C} holds, and $\sup\{d_{F}(x,\de \Omega),x\in\Omega\}<+\infty$. Then for every function $u \in H_0^1(\Omega)$ the following inequality holds:
\begin{equation}
 \label{hardyrem}
\int_{\Omega} F^{2}(\nabla  u) \,dx - \frac{1}{4}\displaystyle \int_{\Omega} \frac{u^2}{d_F^2} \,dx \ge\frac14\int_{\Omega} \frac{u^{2}}{d_{F}^{2}}\left(\log\frac{D}{d_F}\right)^{-2}
dx,
 \end{equation}
where $D= e\cdot\sup\{d_{F}(x,\de \Omega),x\in\Omega\}$.
\end{theo}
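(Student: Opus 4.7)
The plan is to mimic the argument of Theorem \ref{hardyclass} using a subtler comparison vector field inside the convexity inequality for $F^{2}$. Specifically, I will apply
\[
F(\xi_{1})^{2}\ge F(\xi_{2})^{2}+2F(\xi_{2})F_{\xi}(\xi_{2})\cdot(\xi_{1}-\xi_{2})
\]
with $\xi_{1}=\nabla u$ and $\xi_{2}=f(d_{F})\,u\,\nabla d_{F}$, where $f\colon (0,r_{F}]\to[0,+\infty)$ is a profile to be chosen. Using $F(\nabla d_{F})=1$, the $1$-homogeneity of $F$ and the $2$-homogeneity of $F^{2}$, one gets $F(\xi_{2})^{2}=f(d_{F})^{2}u^{2}$ and $2F(\xi_{2})F_{\xi}(\xi_{2})=2f(d_{F})u\,F_{\xi}(\nabla d_{F})$, while $F_{\xi}(\nabla d_{F})\cdot\nabla d_{F}=1$ by Euler's identity \eqref{eq:eul}.

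Integrating over $\Omega$, rewriting $2f(d_{F})u\,F_{\xi}(\nabla d_{F})\cdot\nabla u = f(d_{F})F_{\xi}(\nabla d_{F})\cdot\nabla(u^{2})$, and applying the divergence theorem exactly as in the proof of Theorem \ref{hardyclass} produces
\[
\int_{\Omega}F(\nabla u)^{2}\,dx \ge -\int_{\Omega}\bigl(f(d_{F})^{2}+f'(d_{F})\bigr)u^{2}\,dx \;-\; \int_{\Omega}u^{2}f(d_{F})\,\Delta_{F}d_{F}\,dx.
\]
For $f\ge 0$ the last term is non-negative by hypothesis \eqref{C}, so everything reduces to the one-dimensional problem of finding $f\ge 0$ on $(0,r_{F}]$ with
\[
-f(t)^{2}-f'(t)=\frac{1}{4t^{2}}+\frac{1}{4t^{2}\log^{2}(D/t)}.
\]
Setting $L(t)=\log(D/t)$ and trying the ansatz $f(t)=\tfrac{1}{2t}\bigl(1-\tfrac{1}{L(t)}\bigr)$ --- equivalently, the ground-state substitution $u=\phi v$ with $\phi(t)=t^{1/2}L(t)^{1/2}$, so that $f=\phi'/\phi$ --- a direct calculation yields the above identity \emph{exactly}. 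Moreover, since $D=e\sup_{\Omega}d_{F}$, we have $L(d_{F})\ge 1$ throughout $\Omega$, so $f(d_{F})\ge 0$ and \eqref{hardyrem} follows.

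The main technical obstacle I foresee is the rigorous justification of the integration by parts involving $\Delta_{F}d_{F}$: this is only a non-positive distribution, while $f(d_{F})$ blows up near $\partial\Omega$ like $1/(2d_{F})$. I would handle this by first proving \eqref{hardyrem} for $u\in C^{\infty}_{c}(\Omega)$ --- for which $u^{2}f(d_{F})$ is a legitimate non-negative Lipschitz test function in the distributional formulation of \eqref{C} --- and then extending to $H^{1}_{0}(\Omega)$ by density, observing that both integrands on the right-hand side of \eqref{hardyrem} are dominated by $\tfrac{1}{2}u^{2}/d_{F}^{2}\in L^{1}(\Omega)$ thanks to $L(d_{F})\ge 1$ and to the basic Hardy inequality \eqref{hardy}.
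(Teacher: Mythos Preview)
Your proof is correct and is essentially the paper's own argument: writing $X(t)=-1/\log t$, the paper chooses
\[
\xi_{2}=\frac{u}{2}\,\frac{\nabla d_{F}}{d_{F}}\Bigl[1-X\Bigl(\tfrac{d_{F}}{D}\Bigr)\Bigr],
\]
which is precisely your $f(d_{F})\,u\,\nabla d_{F}$ with $f(t)=\tfrac{1}{2t}\bigl(1-1/L(t)\bigr)$, and then performs the same convexity inequality plus divergence theorem plus \eqref{C} computation that you outline. Your framing via the Riccati identity $-f^{2}-f'=\tfrac{1}{4t^{2}}+\tfrac{1}{4t^{2}L^{2}}$ (equivalently the ground-state substitution $\phi=t^{1/2}L^{1/2}$) and your explicit discussion of the $C^{\infty}_{c}$-to-$H^{1}_{0}$ density step are nice clarifications, but the core of the argument is identical.
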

\begin{proof}
We will use the following notation: 
\[
X(t)=-\frac{1}{\log t}, \quad t\in]0,1[.
\]
Being $F^{2}$ convex, we have that
\[
F(\xi_{1})^{2}\ge F(\xi_{2})^{2}+2F(\xi_{2})F_{\xi}(\xi_{2})\cdot (\xi_{1}-\xi_{2}).
\]
Let us consider
\[
\xi_{1}= \nabla u, \quad \xi_{2}=\frac u2 \frac{\nabla d_{F}}{d_{F}}\left[1-X\left(\frac{d_{F}}{D}\right)\right].
\] 
Being $d_{F}(x)\le \frac{D}{e}$, by the 1-homogeneity of $F$ we get
\begin{multline}
 F(\nabla u)^2\,dx \ge  \\
 \shoveright{\frac14\frac{u^{2}}{d^{2}}F^{2}(\nabla d)\left[1-X\left(\frac{d_{F}}{D}\right)\right]^{2}
 + \frac{u}{d_{F}}\left[1-X\left(\frac{d_{F}}{D}\right)\right]\times}
 \\[.2cm]
\shoveright{\times F(\nabla d)F_{\xi}(\nabla d)\cdot \left(\nabla u-\frac{u}{2}\frac{\nabla d_{F}}{d_{F}}\left[1-X\left(\frac{d_{F}}{D}\right)\right] \right)=}
\\[.2cm]
=-\frac{1}{4}\frac{u^2}{d_F^2}\left[1-X\left(\frac{d_{F}}{D}\right)\right]^{2}
+  \\+
\frac{u}{d_F}\left[1-X\left(\frac{d_{ F}}{D}\right)\right]  F_{\xi}(\nabla d_{F}) \cdot \nabla u
\label{eqeq1} 
\end{multline}
where last equality follows by $F(\nabla d_{F})=1$, the 1-homogeneity of $F$ and property \eqref{eq:eul}. 
Let us observe that, using the Divergence Theorem (in a general setting, contained for example in \cite{anz}), we have
\begin{multline}
\label{eqeq}
\int_{\Omega} \frac{u}{d_F}\left[1-X\left(\frac{d_{F}}{D}\right)\right]  F_{\xi}(\nabla d_{F}) \cdot \nabla u\, dx =\\=
-\int_{\Omega} \frac{u^{2}}{2} \divergenza\left( \left[1-X\left(\frac{d_{F}}{D}\right)\right]\frac{F_{\xi}(\nabla d_{F})}{d_{F}}  \right)dx=
\\[.2cm]= 
\int_{\Omega}\frac{u^{2}}{2} \left\{
\left[1-X\left(\frac{d_{F}}{D}\right)+X^{2}\left(\frac{d_{F}}{D}\right)\right]\frac{F_{\xi}(\nabla d_{F})\cdot \nabla d_{F}}{d_{F}^{2}}+\right.\\ 
\qquad\qquad\qquad\qquad\qquad\qquad\left.-\left[1-X\left(\frac{d_{F}}{D}\right)\right]\frac{\Delta_{F} d_{F}}{d_{F}}
\right\}dx
\ge\\[.2cm]
\ge \int_{\Omega} \frac{1}{2} \frac{u^{2}}{d_{F}^{2}}
\left[1-X\left(\frac{d_{F}}{D}\right)+X^{2}\left(\frac{d_{F}}{D}\right)\right]dx,
\end{multline}
where last inequality follows using the condition $-\Delta_{F} d_F \ge 0$.

Integrating \eqref{eqeq1}, and using \eqref{eqeq} we easily get

\begin{multline*}
 \displaystyle\int_{\Omega} F(\nabla u)^2\,dx \ge \int_{\Omega} \frac 1 4\frac{u^{2}}{d_{F}^{2}}\times
 \\
 \times \left\{
 -\left[1-X\left(\frac{d_{F}}{D}\right)\right]^{2}
+
2-2\,X\left(\frac{d_{F}}{D}\right)+2\,X^{2}\left(\frac{d_{F}}{D}\right)
 \right\}dx =\\
 =\frac{1}{4}\displaystyle \int_{\Omega} \frac{u^2}{d_F^2} \,dx +\frac14\int_{\Omega} \frac{u^{2}}{d_{F}^{2}}X^{2}\left(\frac{d_{F}}{D}\right)dx,
\end{multline*}
and the proof is completed.
\end{proof}

\begin{rem}
We observe that if $\Omega$ is a convex domain in $\R^{n}$, arguing  as in Remark \ref{alphabeta}, an  inequality of the type \eqref{hardyrem}  can be immediately obtained by using the following improved Hardy inequality involving $d_{\mathcal{E}}$ contained in \cite{bft}
\begin{equation}
	\int_{\Omega}|\nabla u|^{2} dx - \frac{1}{4}
	\int_{\Omega}\frac{|u|^{2}}{d_{{\mathcal{E}}}^{2}}dx \ge  \frac{1}{4} 
	 \int_{\Omega} \frac{ |u|^{2} }{ d_{\mathcal{E}}^{2}}
\left(\log\frac{D_{0}}{d_{\mathcal{E}}}\right)^{-2}	 
	 dx,
\end{equation}
where  $D_{0}\ge e\cdot\sup d_{\mathcal{E}}(x,\de \Omega)$  and  $u\in H_0^1(\Omega)$. Obviously also in this case it is not possible to obtain the optimal constants.
\end{rem}
\begin{cor}
Under the same assumptions of Theorem \ref{reminder}, the following anisotropic improved Hardy inequality holds
\begin{equation}
\label{h}
 \int_{\Omega} F(\nabla u)^{2} dx -  \frac{1}{4} \int_{\Omega} \frac{ |u|^{2} }{ d_F^{2}} dx \ge \frac{1}{4r_F^2} \int_{\Omega}|u|^{2} \,dx,
\end{equation}
where $r_F$ is the anisotropic inradius defined in \eqref{inrad}.
\end{cor}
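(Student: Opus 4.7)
The plan is to deduce the corollary directly from the improved Hardy inequality \eqref{hardyrem} of Theorem \ref{reminder} by a pointwise estimate of the weight
\[
w(x) := \frac{1}{d_F(x)^{2}}\left(\log\frac{D}{d_F(x)}\right)^{-2},\qquad D=e\cdot r_F,
\]
that appears on the right-hand side. Since \eqref{hardyrem} gives
\[
\int_{\Omega} F(\nabla u)^{2}\,dx - \frac{1}{4}\int_{\Omega} \frac{u^{2}}{d_F^{2}}\,dx \ge \frac14\int_{\Omega} u^{2}\,w(x)\,dx,
\]
it suffices to show that $w(x)\ge 1/r_F^{2}$ pointwise on $\Omega$, and the desired estimate \eqref{h} will follow.

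To prove the pointwise bound, I would set $t=d_F(x)/D\in(0,1/e]$ (this is where the hypothesis $D=e\cdot\sup d_F$ is used) and rewrite
\[
w(x)=\frac{1}{D^{2}}\cdot\frac{1}{t^{2}(\log t)^{2}}.
\]
So the task reduces to showing $t\,|\log t|\le 1/e$ for all $t\in(0,1/e]$. This is a one-variable calculus fact: the function $g(t)=-t\log t$ is strictly increasing on $(0,1/e]$, since $g'(t)=-\log t-1\ge 0$ on this interval, with maximum value $g(1/e)=1/e$. Hence $t^{2}(\log t)^{2}\le 1/e^{2}$, which gives
\[
w(x)\ \ge\ \frac{e^{2}}{D^{2}}\ =\ \frac{1}{r_F^{2}}.
\]

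Putting the two pieces together, one obtains
\[
\int_{\Omega} F(\nabla u)^{2}\,dx - \frac{1}{4}\int_{\Omega} \frac{u^{2}}{d_F^{2}}\,dx \ge \frac{1}{4r_F^{2}}\int_{\Omega} u^{2}\,dx,
\]
which is \eqref{h}. There is no genuine obstacle here: the only delicate ingredient is the choice of the constant $D=e\cdot r_F$ in Theorem \ref{reminder}, which is tuned precisely so that $d_F/D\le 1/e$ and the maximum of $-t\log t$ on $(0,1/e]$ is realized. This is what makes the reduction clean. The main qualitative point worth emphasizing in the writeup is that \eqref{h} is a Poincaré-type consequence of the logarithmic remainder, with an explicit constant depending only on the anisotropic inradius.
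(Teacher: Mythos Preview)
Your proof is correct and essentially identical to the paper's: both reduce \eqref{h} to the pointwise bound $d_F\log(D/d_F)\le r_F$ via the monotonicity of a single-variable function. The only cosmetic difference is the choice of variable---you set $t=d_F/D$ and study $g(t)=-t\log t$ on $(0,1/e]$, while the paper keeps $t=d_F$ and studies $f(t)=-t\log(t/(e r_F))$ on $(0,r_F]$; these are the same up to rescaling.
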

\begin{proof}
By Theorem \ref{reminder}, to prove \eqref{h} it is sufficient  to show that 
\begin{equation*}
\label{app}
 \int_{\Omega} \frac{ |u|^{2} }{ d_F^{2}}
 \left(\log\frac{D}{d_F}\right)^{-2} dx \ge \frac{1}{r_F^2} \int_{\Omega}|u|^{2} \,dx.
\end{equation*}
This  is a consequence of the monotonicity of the following function
\[
f(t)= -t \log\left(\frac{t}{\text{e}\cdot r_F}\right), \quad 0<t\le r_F.
\]
Indeed  $f $ is strictly increasing and its maximum is $r_F$. This concludes the proof.
\end{proof}
An immediate consequence of the previous result is contained in the following remark.
\begin{rem}
Let $\Omega \subset \R^n$ be a bounded convex domain. Then there exists a positive constant $C(n)>0$ such that for any $u \in H_0^1(\Omega)$ we have  
 \begin{equation*}
\label{h}
 \int_{\Omega} F(\nabla u)^{2} dx -  \frac{1}{4} \int_{\Omega} \frac{ |u|^{2} }{ d_F^{2}}dx \ge C(n)|\Omega|^{-\frac{2}{n}} \int_{\Omega}|u|^{2} \,dx.
\end{equation*}
\end{rem}

\section{Optimality of the constants}
Here we prove the optimality of the constants and of the exponent which appear in the Hardy inequality \eqref{hardyrem}. More precisely, we prove the following result:
\begin{theo}
\label{opt}
Let $\Omega$ be a piecewise $C^{2}$ domain of $\R^{n}$. Suppose that the following Hardy inequality holds:
\begin{multline}
\label{ineqopt}
\int_{\Omega} F^{2}(\nabla  u) \,dx - A \displaystyle \int_{\Omega} \frac{u^2}{d_F^2} \,dx \ge B
\int_{\Omega} \frac{u^{2}}{d_{F}^{2}}
\left(\log\frac{D}{d_F}\right)^{-\gamma}
dx, \\  \forall u\in H_{0}^{1}(\Omega),
\end{multline}
for some constants $A>0$, $B\ge 0$, $\gamma>0$,
where $D= e\cdot\sup\{d_{F}(x,\de \Omega),x\in\Omega\}$.
Then:
\begin{enumerate}
\item[$(T_{1})$] \label{tesi1} $A\le \frac{1}{4}$;
\item[$(T_{2})$] \label{tesi2} If $A=\frac{1}{4}$ and $B>0$, then $\gamma\ge2$;
\item[$(T_{3})$] \label{tesi3} If $A=\frac14$ and $\gamma=2$, then $B\le \frac{1}{4}$.
\end{enumerate}
\end{theo}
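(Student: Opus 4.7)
I test \eqref{ineqopt} against explicit families of functions depending only on the anisotropic distance $d_F$, which reduces everything to weighted one-dimensional inequalities and allows me to saturate each constant by a suitable trial function. For a Lipschitz $\phi\colon[0,r_F]\to\R$ with $\phi(0)=0$, set $u=\phi\circ d_F\in H^1_0(\Omega)$. Since $F(\nabla d_F)=1$ a.e., the coarea formula applied to $d_F$ gives
\[
\int_\Omega F^2(\nabla u)\,dx=\int_0^{r_F}\!\phi'(t)^2 P(t)\,dt,\quad \int_\Omega\frac{u^2}{d_F^2}\,dx=\int_0^{r_F}\!\frac{\phi(t)^2}{t^2}P(t)\,dt,
\]
and an analogous identity, with the extra factor $X(t/D)^\gamma$ (where $X(s):=-1/\log s$), for the third term of \eqref{ineqopt}. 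Here $P(t):=\int_{\{d_F=t\}}|\nabla d_F|^{-1}\,d\mathcal H^{n-1}$, and the piecewise-$C^2$ hypothesis on $\de\Omega$ ensures that $P$ is continuous up to $t=0$ with $P(0)>0$. Thus \eqref{ineqopt} becomes a weighted 1D inequality for $\phi$.

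\emph{Proof of $(T_1)$.} Take $\phi_\epsilon(t)=t^{1/2+\epsilon}$, $\epsilon>0$: since $\phi_\epsilon'(t)^2=(1/2+\epsilon)^2\phi_\epsilon(t)^2/t^2$, the 1D inequality reads
\[
\bigl[(1/2+\epsilon)^2-A\bigr]\int_0^{r_F}t^{2\epsilon-1}P(t)\,dt\ge B\int_0^{r_F}t^{2\epsilon-1}X(t/D)^\gamma P(t)\,dt.
\]
Using the substitution $s=\log(D/t)$, the integral on the left diverges like $P(0)/(2\epsilon)$, while the one on the right remains bounded because its mass concentrates at $s\sim 1/(2\epsilon)$, where $s^{-\gamma}\sim(2\epsilon)^\gamma\to 0$. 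Therefore the ratio RHS$/$LHS goes to $0$, and letting $\epsilon\to 0^+$ gives $A\le 1/4$.

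\emph{Proofs of $(T_2)$ and $(T_3)$.} Assume $A=1/4$ and try $\phi(t)=t^{1/2}g(X(t/D))$. Since $dX/dt=X^2/t$, one has $\phi'(t)=t^{-1/2}[g(y)/2+y^2 g'(y)]$ with $y=X(t/D)$, so a direct computation and the change of variables $dt/t=dy/y^2$ turn the 1D inequality into
\[
\int_0^1 \bigl[g(y)g'(y)+y^2 g'(y)^2\bigr]P(t(y))\,dy\ge B\int_0^1 g(y)^2 y^{\gamma-2}P(t(y))\,dy.
\]
Take $g_\delta(y)=y^r\chi_\delta(y)\chi_0(y)$, where $\chi_\delta$ is a smooth cutoff vanishing on $[0,\delta]$ and equal to $1$ on $[2\delta,+\infty)$, and $\chi_0$ is supported in $[0,y_0]$ for a small fixed $y_0>0$. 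For $r>-1/2$ the cutoff contributions to $\int y^2(g')^2 P\,dy$ are $O(\delta^{2r+1})$, and $\int g g' P\,dy=-\tfrac12\int g^2\,(dP/dy)\,dy$ is negligible because $|dP/dy|\lesssim De^{-1/y_0}/y_0^2$ is small for small $y_0$; so up to lower-order terms the inequality reduces to $\int_0^{y_0}y^2 g'(y)^2\,dy\ge B\int_0^{y_0} g(y)^2 y^{\gamma-2}\,dy$. For $(T_2)$, if $\gamma<2$ pick $r\in\bigl(-1/2,(1-\gamma)/2\bigr]$ (nonempty): the left side stays bounded as $\delta\to 0^+$, while the right side diverges because $2r+\gamma-2\le-1$, contradicting $B>0$; hence $\gamma\ge 2$. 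For $(T_3)$, with $\gamma=2$ take $r=-1/2+\epsilon$: the ratio of the two integrals equals $(1/2-\epsilon)^2+o(1)$, whence $B\le(1/2-\epsilon)^2$, and letting $\epsilon\to 0^+$ gives $B\le 1/4$.

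\emph{Main obstacle.} The technical nucleus is making the reduction to one dimension capture the sharp constants: one has to check that $P(t)$ has a positive limit $P(0)$ as $t\to 0^+$ (using the piecewise-$C^2$ hypothesis to select a smooth portion of $\de\Omega$), control the ``bending'' term $\int g g' P\,dy$ in the second reduction, and verify that all cutoff errors remain lower-order. Once this is settled, each constant in \eqref{ineqopt} is pinned down by a single explicit trial function.
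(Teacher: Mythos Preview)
Your strategy---testing \eqref{ineqopt} with trial functions that behave like $d_F^{1/2+\epsilon}$ (with logarithmic corrections) and reducing via coarea to one-dimensional weighted integrals---is the same as the paper's, and the computations for $(T_1)$--$(T_3)$ are essentially equivalent. The paper organizes everything through the integrals $J_\beta(\epsilon)=\int \varphi^2 d_F^{-1+2\epsilon}X^{-\beta}(d_F/D)\,dx$ and their asymptotics; for $(T_2)$--$(T_3)$ it uses the single family $U_\epsilon=\varphi\, d_F^{1/2+\epsilon}X^{-\theta}(d_F/D)$ with $\theta\in(1/2,1)$ and $\theta\to 1/2$ at the end, whereas you regularize with a hard cutoff $\chi_\delta$ and parameters $r>-1/2$. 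These are different bookkeeping choices for the same idea.

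There is, however, one genuine gap. The theorem does not assume $\Omega$ bounded---only that $r_F<\infty$---so $P(t)=\int_{\{d_F=t\}}|\nabla d_F|^{-1}\,d\mathcal H^{n-1}$ can be infinite (an infinite slab already gives $P\equiv\infty$), and then your reduction collapses before it begins. A smaller point: your first trial $u=d_F^{1/2+\epsilon}$ is not Lipschitz in $t$, so it does not fit the ``Lipschitz $\phi$ with $\phi(0)=0$'' framework you set up (though $u\in H^1_0$ is easily checked directly for bounded $\Omega$). Both issues are cured simultaneously by multiplying your trials by a smooth cutoff $\varphi$ supported in a Wulff ball around a single smooth boundary point---exactly what the paper does. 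Then your $P(t)$ becomes the local quantity $\int_{\{d_F=t\}}\varphi^2|\nabla d_F|^{-1}\,d\mathcal H^{n-1}$, which is finite with a positive limit at $t=0$, and the cross terms from $\nabla\varphi$ are $O(1)$ and harmless, as you already anticipate in your ``main obstacle'' paragraph. With this localization in place your argument goes through and coincides with the paper's.
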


\begin{proof}
The proof is similar to the one obtained in the Euclidean case by \cite{bft}. 
For the sake of completeness, we describe it in details. As before, 
let us denote by
\[
X(t)=-\frac{1}{\log t}, \quad t\in]0,1[.
\]
In order to prove the results we will provide a local analysis. 
Hence, we fix a Wulff shape $\mathcal W_{\delta}$ of radius $\delta$ centered 
at a point $x_{0}\in\de\Omega$. Being $\de\Omega$ piecewise smooth,
we may suppose that  for a sufficiently small $\delta$, 
$\de\Omega\cap \mathcal W_{\delta}$ is $C^{2}$.
Let now $\varphi$ be a nonnegative cut-off function in $C^{\infty}_{0}(\mathcal W_{\delta}\cap \Omega)$ such that $\varphi(x)=1$ for $x\in \mathcal W_{\delta/2}$. 
First of all, we prove some technical estimates which will be useful in the following. For $\eps>0$ and $\beta\in\R$ let us consider
\begin{equation}
\label{J}
J_{\beta}(\eps)=\int_{\Omega} \varphi^{2}d_{F}^{-1+2\eps}X^{-\beta}(d_{F}/D)\,dx.
\end{equation}
We split the proof in several claims.
\begin{claim}
The following estimates hold:
\begin{enumerate}
\item[(i)] $c_{1} \eps^{-1-\beta}\le J_{\beta}(\eps)\le c_{2} \eps^{-1-\beta}$, for $\beta>-1$, where $c_{1},c_{2}$ are positive constants independent of $\eps$;
\item[(ii)] $J_{\beta}(\eps)=\dfrac{2\eps}{\beta+1}J_{\beta+1}(\eps)+O_{\eps}(1)$, for $\beta >-1$;
\item[(iii)] $J_\beta(\eps)=O_{\eps}(1)$, for $\beta<-1$.
\end{enumerate}
\end{claim}
\textit{Proof of Claim 1}. By the coarea formula,
\begin{equation*}
J_{\beta}(\eps)= \int_{0}^{\delta} r^{-1+2\eps}X^{-\beta}(r/D) \left( \int_{d_{F}=r}\frac{\varphi^{2}}{|\nabla d_{F}|}dH^{n-1}\right)dr
\end{equation*}
Being $F(\nabla d_{F})=1$, by \eqref{crescita}, $0<\alpha_{1}\le |\nabla d_{F}|\le\alpha_{2}$ and
\[
0<C_{1}\le \int_{d_{F}=r}\frac{\varphi^{2}}{|\nabla d_{F}|}dH^{n-1}\le C_{2}.
\]
Then if $\beta<-1$, (iii) easily follows. Moreover, if $\beta >-1$, performing the change of variables $r=Ds^{1/\eps}$, (i) holds. 

As regards (ii), let us observe that
\[
\frac{d}{dr}X^{\beta}= 
\beta \frac{X^{\beta+1}}{r}.
\]
Recalling that $1=F(\nabla d_{F})F_{\xi}(\nabla d_{F})\cdot \nabla d_{F}$, then
\begin{align*}
\hspace{-.8cm}(\beta+1)J_{\beta}(\eps)=&
-\int_{\Omega}\varphi^{2} d_{F}^{2\eps} F(\nabla d_{F})F_{\xi}(\nabla d_{F})\cdot \nabla [X^{-\beta-1}(d_{F}/D)]dx=\\
=&\phantom{-}\int_{\Omega} \divergenza\left(
\varphi^{2} d^{2\eps}F(\nabla d_{F})F_{\xi}(\nabla d_{F})\right)X^{-\beta-1}(d_{F}/D)\,dx=\\
=& 2\int_{\Omega} \varphi d_{F}^{2\eps} X^{-\beta-1}(d_{F}/D)F(\nabla d_{F})F_{\xi}(\nabla d_{F})\cdot\nabla\varphi dx+\\
& \quad+2\eps\int_{\Omega}\varphi d_{F}^{2\eps-1}X^{-\beta-1}(d_{F}/D)\,dx
+\\
&\quad+\int_{\Omega} \varphi^{2} d_{F}^{2\eps} X^{-\beta-1}(d_{F}/D) \Delta_{F}d_{F}\,dx=\\
&= O_{\eps}(1) + 2\eps J_{\beta+1}(\eps). 
\end{align*}
We explicitly observe that 
\[
\int_{\Omega} \varphi^{2} d_{F}^{2\eps} X^{-\beta-1}(d_{F}/D) \Delta_{F}d_{F}\,dx =O_{\eps}(1)
\]
being $d_{F}$ a $C^{2}$ function in a neighborhood of the boundary (see \cite{cm07}). Then (ii) holds.

In the next claim we estimate the left-hand side of \eqref{ineqopt} when $u=U_{\eps}$, with
\[
U_{\eps}(x)=\varphi(x) w_{\eps}(x),\quad w_{\eps}(x)=d_{F}^{\frac12 +\eps} X^{-\theta}(d_{F}(x)/D),\quad \frac12<\theta<1.
\]
Let us define 
\[
\mathcal Q[U_{\eps}]:=\int_{\Omega} \left(F(\nabla U_{\eps} )^{2} - \frac{1}{4} \frac{U_{\eps}^{2}}{d_{F}^{2}} \right)dx.
\]
\begin{claim} The following estimates hold:
\begin{align}
\label{primo}
&\mathcal Q[U_{\eps}] \le \frac{\theta}{2} J_{2\theta-2} (\eps) +O_{\eps}(1), && \text{as }\eps\to 0;\\
\label{secondo}
&\int_{W_{\delta } \cap \Omega } F( \nabla U_{\eps})^{2}dx \le \frac{1}{4} J_{2\theta} (\eps) + O_{\eps}\left(\eps^{1-2\theta}\right), && \text{as }\eps\to 0.
\end{align}
\end{claim}
\textit{Proof of Claim 2}. The convexity of $F$ implies that
\[
F(\xi + \eta)^{2} \le F(\xi)^{2}+2 F(\xi)F(\eta)+F(\eta)^{2}, \quad \forall \xi,\eta\in \R^{n}.
\]
Hence by the homogeneity of $F$,
\begin{multline*}
\int_{\Omega} F(\nabla U_{\eps})^{2} dx \le   \\ \le 
\int_{\mathcal W_{\delta}\cap \Omega}   \varphi^{2} F^{2}(\nabla w_{\eps})dx+
\int_{\mathcal W_{\delta}\cap \Omega} w^{2}_{\eps} F^{2}(\nabla \varphi)dx + \\
\qquad\qquad\qquad\qquad\qquad+\int_{\mathcal W_{\delta}\cap \Omega} 2\varphi w_{\eps} F(\nabla\varphi)F(\nabla w_{\eps})
dx =
 \\
= \int_{W_{\delta}\cap \Omega} \varphi^{2} d_{F}^{2\eps-1}X^{-2\theta}(d_{F}/D)
\left(
\eps+\frac12 -\theta X(d_{F}/D)
\right)^{2} dx+ I_{1}+I_{2}.
\end{multline*}
As matter of fact,
\[
I_{2}\le C
 \int_{W_{\delta}\cap \Omega} d_{F}^{2\eps}X^{-2\theta}(d_{F}/D) dx =O_{\eps}(1);
\]
similarly, also $I_{1}=O_{\eps}(1)$. Then 

\begin{multline}
\label{5.12}
\mathcal Q[U_{\eps}]  \le \\
\le \int_{W_{\delta}\cap \Omega} \varphi^{2} d_{F}^{2\eps-1}X^{-2\theta}(d_{F}/D)
\left[\left(
\eps+\frac12 -\theta X(d_{F}/D)
\right)^{2}-\frac 14\right] dx\\+O_{\eps}(1) =
\\
\le \int_{W_{\delta}\cap \Omega} \varphi^{2} d_{F}^{2\eps-1}X^{-2\theta}(d_{F}/D)
\left(
\eps-\theta X(d_{F}/D)
\right)^{2} dx
+ \\ +
\int_{W_{\delta}\cap \Omega} \varphi^{2} d_{F}^{2\eps-1}X^{-2\theta}(d_{F}/D)
\left(
\eps-\theta X(d_{F}/D)
\right)
+O_{\eps}(1) =\\= a_{1}+a_{2}+O_{\eps}(1).
\end{multline}
Using (ii) of Claim 1 with $\beta=-1+2\theta$  we get 
\begin{equation}
\label{5.14}
a_{2}=O_{\eps}(1)  \quad \eps \to 0. 
\end{equation}
As regards $a_{1}$, similarly, applying  (ii) of Claim 1 with $\beta =2\theta-1$  in the first time and  $\beta =2 \theta -2$ in the second time we obtain 
\begin{equation}
\label{5.15}
a_{1}= \frac{\theta}{2}\int_{\mathcal{W}_{\delta}}\varphi^{2}d_{F}^{2 \eps-1}X^{2-2\theta}(d_{F}/D)\,dx + O_{\eps}(1).
\end{equation}
Then \eqref{primo} follows by \eqref{5.12},\eqref{5.14},\eqref{5.15} and \eqref{J}.
Finally observing that
\begin{equation}
\int_{\mathcal{W}_{\delta}\cap \Omega} F(\nabla U_{\eps})^{2}\,dx=\mathcal{Q}[U_{\eps}] +\frac 14 J_{2 \theta}(\eps),
\end{equation}
then the inequality \eqref{secondo} follows from \eqref{primo} and (ii) of Claim 1.

Now we are in position to conclude the proof of the Theorem.

Since inequality \eqref{ineqopt} holds for any $u \in H_{0}^{1}(\Omega)$ we take as test function $U_{\eps}$. Then by \eqref{secondo} and (i) of Claim 1 we have
\begin{equation*}
A \le \frac{1}{J_{2\theta}(\eps)} {\displaystyle\int_{\mathcal{W}_{\delta}\cap \Omega} 
F(\nabla U_{\eps})^{2}\, dx}
\le \frac14 +O_{\eps}(\eps).
\end{equation*}
Letting $\eps \to 0$  we obtain $(T_{1})$.

In order to prove $(T_{2})$ we put $A=\frac{1}{4}$ and reasoning by contradiction we assume that $\gamma <2$. As before by \eqref{primo} and (i) of Claim 1, we have
\begin{equation*}
0<B \le \displaystyle \frac{Q[U_{\eps}]}{J_{2\theta-\gamma}(\eps)} \le \displaystyle C\frac{\eps^{1-2\theta}}{\eps^{\gamma-1-2\theta}}= C \eps^{2-\gamma} \to 0 \, \text{as} \eps \to 0,
\end{equation*}
which is a contradiction and then $\gamma \ge 2$.

To conclude the proof of the Theorem we just  have to prove $(T_3)$.

If $A=\frac 14 $ and $\gamma=2$ then by \eqref{primo} we have
\begin{equation*}
B \le  \displaystyle \frac{Q[U_{\eps}]}{J_{2\theta-2}(\eps)} \le  \displaystyle \frac{\frac{\theta}{2}J_{2\theta-2}(\eps)+O_{\eps}(1)}{J_{2\theta-2}(\eps)}.
\end{equation*}
Then by assumption on $\theta $ and (i) of Claim 1 , letting $\eps \to 0$ we get
$$
B \le \frac{\theta}{2}.
$$  
Hence $(T_3)$ follows by letting $\theta \to \frac 12.$
\end{proof}
\begin{rem}
We stress that Theorem \ref{opt} assures that the involved constants in \eqref{hardyrem} are optimal, and also in the anisotropic Hardy inequality \eqref{hardy}. Actually, the presence of the remainder term in \eqref{hardyrem} guarantees that the constant $\frac14$ in \eqref{hardy} is not achieved.
\end{rem}
\textbf{Acknowledgement.}  This work has been partially supported by the FIRB 2013 project Geometrical and qualitative aspects of PDE's and by GNAMPA of INDAM.


\begin{thebibliography}{10}
\bibliographystyle{abbrv}
\small

\bibitem[ATW93]{atw}
\by{Almgren Jr, F.J., Taylor, J.E. and Wang, L.}
\paper{Curvature-driven flows: a variational approach}
\jour{SIAM J. Control Optim.}
\vol{31}
\pg{387--438} 
\yr{1993}
\endpaper

\bibitem[AFV12]{afv}
\by{Alvino, A., Ferone, A. and Volpicelli, R.}
\paper{Sharp Hardy inequalities in the half space with trace remainder term}
\jour{Nonlinear Anal.}
\vol{75}
\pg{5466--5472}
\yr{2012}
\endpaper

\bibitem[A83]{anz}
\by{Anzellotti, G.}
\paper{Pairings between measures and bounded functions and compensated compactness}
\jour{Ann. Mat. Pura Appl. (4)}
\vol{135}
\pg{293--318} 
\yr{1983}
\endpaper

\bibitem[AK85]{ak}
\by{Armitage, D. H. and Kuran, {\"U}}
\paper{The convexity of a domain and the superharmonicity of the
              signed distance function}
\jour{Proc. Amer. Math. Soc.}
\vol{93}
\pg{598--600} 
\yr{1985}
\endpaper

\bibitem[BP96]{bp}
\by{Bellettini, G. and Paolini, M.}
\paper{Anisotropic motion by mean curvature in the context of Finsler geometry}
\jour{Hokkaido Math. J.}
\vol{25}
\pg{537--566} 
\yr{1996}
\endpaper

\bibitem[BFT03]{bft03}
\by{Barbatis, G., Filippas, S. and Tertikas, A.}
\paper{Series expansion for $L^{p}$ Hardy inequalities}
\jour{Indiana Univ. Math. J.}
\vol{52}
\yr{2003}
\pg{171-190}
\endpaper

\bibitem[BFT04]{bft}
\by{Barbatis, G., Filippas, S. and Tertikas, A.}
\paper{A unified approach to improved {$L^p$} {H}ardy inequalities
              with best constants}
\jour{Trans. Amer. Math. Soc.}
\vol{356}
\yr{2004}
\pg{2169-2196 (electronic)}
\endpaper

\bibitem[BCT07]{bct}
\by{Brandolini, B., Chiacchio, F., Trombetti, C.}
\paper{Hardy type inequalities and Gaussian measure}
\jour{Comm. Pure Appl. Anal.}
\vol{6}
\yr{2007}
\pg{411-428}
\endpaper

\bibitem[BF04]{bfra}
\by{Brasco, L. and Franzina, G.,}
\paper{Convexity properties of Dirichlet integrals and Picone-type inequalities}
\jour{Kodai Math. J.}
\vol{37}
\yr{2014}
\pg{769-799}
\endpaper

\bibitem[BM97]{bm97}
\by{Brezis, H. and Marcus, M.}
\paper{Hardy's inequalities revisited}
\jour{Ann. Scuola Norm. Sup. Pisa Cl. Sci. (4)}
\vol{25}
\yr{1997}
\pg{217-237}
\endpaper

\bibitem[BV97]{bv97}
\by{Brezis, H. and Vazquez, J.-L.}
\paper{Blow-up solutions of some nonlinear elliptic problems}
\jour{Rev. Mat. Univ. Complut. Madrid}
\vol{10}
\yr{1997}
\pg{443-469}
\endpaper

\bibitem[CM07]{cm07}
\by{Crasta G. and Malusa, A.}
\paper{The distance function from the boundary in a Minkowski space}
\jour{Trans. Amer. Math. Soc.} 
\vol{359}
\pg{5725--5759}
\yr{2007}
\endpaper

\bibitem[D95]{da95}
\by{Davies, E.B.}
\paper{The Hardy constant}
\jour{Quart. J. Math. Oxford (2)} 
\vol{46}
\pg{417--431}
\yr{1995}
\endpaper

\bibitem[DPG13]{dpg1}
\by{Della Pietra F. and Gavitone N.}
     \paper{Relative isoperimetric inequality in the plane: the anisotropic case}
   \jour{J. Convex. Anal.}
    \vol{20}
 \yr{2013}
\pg{157--180}
\endpaper 

\bibitem[DPG14]{pota}
\by{Della Pietra F. and Gavitone N.}
     \paper{Faber-Krahn inequality for anisotropic eigenvalue problems with {R}obin boundary conditions}
   \jour{Potential Anal.}
    \vol{41}
 \yr{2014}
\pg{1147--1166}
\endpaper 

\bibitem[DPG15]{zaa}
\by{Della Pietra F. and Gavitone N.}
\paper{Sharp estimates and existence for anisotropic elliptic problems with general growth in the gradient}
\jour{to appear on Zeitschrift f\"ur Analysis und ihre Anwendungen (ZAA)}
\endprep


\bibitem[EG92]{eg}
\by{Evans, L.C. and Gariepy, R.F.}
\book{Geometric measure theory and fine properties of functions}
\publ{CRC Press, 1992}
\endbook
   
\bibitem[FK08]{fk08}
\by{Ferone, V. and Kawohl, B.}
\paper{Remarks on a Finsler-Laplacian}
\vol{137}
\jour{Proc. Am. Math. Soc.}
\yr{2008}
\pg{247-253}
\endpaper

\bibitem[FMT06]{fmt06}
\by{Filippas, S., Maz'ya, V. and Tertikas, A.,}
\paper{On a question of Brezis and Marcus}
\vol{25}
\jour{Calc. Var.}
\yr{2006}
\pg{491-501}
\endpaper

\bibitem[GP98]{gaper}
\by{Garc\'ia Azorero, J.~P. and Peral, I.}
\paper{Hardy inequalities and some critical elliptic and parabolic
  problems}
\jour{J. Differential Equations}
\vol{144}
\pg{441--476}
\yr{1998}
\endpaper

\bibitem[HLP]{hlp}
\by{Hardy G.H., Littlewood J.L., P\'olya G.}
\book{Inequalities}
\publ{Cambridge Univ. Press, 1964}
\endbook

\bibitem[HHL02]{hhl}
\by{Hoffmann-Ostenhof, M., Hoffmann-Ostenhof, T. and Laptev, A.}
     \paper{A Geometrical Version of Hardy's Inequality}
   \jour{J. Funct. Anal.}
    \vol{189}
 \yr{2002}
\pg{539--548}
\endpaper 
   
\bibitem[LLL12]{lll}
\by{Lewis, R., Li, J. and Li, Y.}
\paper{A geometric characterization of a sharp {H}ardy inequality}
\jour{J. Funct. Anal.}
\vol{262}
\yr{2012}
\pg{3159--3185}
\endpaper

\bibitem[MMP98]{mmp}
\by{Marcus, M., Mizel, V. J. and Pinchover, Y.}
\paper{On the best constant for {H}ardy's inequality in {$\bold R^n$}}
\jour{Trans. Amer. Math. Soc.}
\vol{350}
\yr{1998}
\pg{3237--3255}
\endpaper

\bibitem[MS97]{ms97}
\by{Matskewich, T., Sobolevskii, Pavel E.,}
\paper{The best possible constant in generalized Hardy's inequality for
   convex domain in ${\bf R}^n$}
\jour{Nonlinear Anal.}
\vol{28}
\yr{1997}
\pg{1601--1610}
\endpaper

\bibitem[S93]{schn}
\by{Schneider, R..}
\book{Convex bodies: the {B}runn-{M}inkowski theory}
\publ{Cambridge University Press, Cambridge, 1993}
\endbook

\bibitem[V06]{schaft}
\by{Van Schaftingen, J.}
\paper{Anisotropic symmetrization}
\jour{Ann. Inst. H. Poincar\'{e} Anal. Non Lin\'{e}aire}
\vol{23}
\pg{539--565} 
\yr{2006}
\endpaper

\bibitem[VZ00]{vz}
\by{Vazquez, J.L. and Zuazua, E.} 
\paper{The Hardy inequality and the asymptotic behavior 
of the heat equation with an inverse-square potential} 
\jour{J. Funct. Anal.}
\vol{173}
\yr{2000}
\pg{103--153}
\endpaper 

\end{thebibliography}
\end{document}